\theoremstyle{plain} 
\newtheorem{theorem}{Theorem} 
\newtheorem{lemma}[theorem]{Lemma}
\begin{document} 
\title{Two footnotes to the F. \& M. Riesz theorem} 
\date{\today} 

\author{Ole Fredrik Brevig} 
\address{Department of Mathematics, University of Oslo, 0851 Oslo, Norway} 
\email{obrevig@math.uio.no}
\begin{abstract}
	We present a new proof of the F.~\&~M.~Riesz theorem on analytic measures of the unit circle $\mathbb{T}$ that is based the following elementary inequality: If $f$ is analytic in the unit disc $\mathbb{D}$ and $0 \leq r \leq \varrho < 1$, then
	\[\|f_r-f_\varrho\|_1 \leq 2 \sqrt{\|f_\varrho\|_1^2-\|f_r\|_1^2},\]
	where $f_r(e^{i\theta})=f(r e^{i\theta})$ and where $\|\cdot\|_1$ denotes the norm of $L^1(\mathbb{T})$. The proof extends to the infinite-dimensional torus $\mathbb{T}^\infty$, where it clarifies the relationship between Hilbert's criterion for $H^1(\mathbb{T}^\infty)$ and the F.~\&~M.~Riesz theorem. 
\end{abstract}

\subjclass{Primary 30H10. Secondary 42B05, 42B30.}

\maketitle

\section{Introduction} A finite complex Borel measure $\mu$ on the unit circle $\mathbb{T}$ is uniquely determined by the Fourier coefficients
\[\widehat{\mu}(k) = \int_0^{2\pi} e^{-ik\theta}\,d\mu(e^{i\theta}),\]
for $k$ in $\mathbb{Z}$. This assertion is a consequence of the fact that trigonometric polynomials are dense in $C(\mathbb{T})$ and duality in form of the Riesz representation theorem. The protagonist of the present note is the following well-known result due to F.~\&~M.~Riesz (see e.g.~\cite{Riesz1988}*{pp.~195--212}) on analytic measures of the unit circle.
\begin{theorem}\label{thm:fmriesz} 
	If $\mu$ is a finite complex Borel measure on $\mathbb{T}$ that satisfies $\widehat{\mu}(k) = 0$ for $k<0$, then $\mu$ is absolutely continuous. 
\end{theorem}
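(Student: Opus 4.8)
The plan is to use the hypothesis $\widehat{\mu}(k)=0$ for $k<0$ to produce a genuinely holomorphic function to which the stated inequality applies, and then to extract absolute continuity from the resulting $L^1$-convergence. First I would pass to the Poisson integral of $\mu$: because the negatively indexed Fourier coefficients vanish, the harmonic extension of $\mu$ to $\mathbb{D}$ is in fact holomorphic there, given by a power series $f$ whose Taylor coefficients are (a fixed multiple of) the $\widehat{\mu}(k)$ with $k\geq 0$. Since $|\widehat{\mu}(k)|\leq\|\mu\|$, this series has radius of convergence at least $1$, so $f$ is holomorphic in $\mathbb{D}$. In the notation of the abstract, $f_r=P_r\ast\mu$ is the convolution of $\mu$ with the Poisson kernel $P_r$, and the convolution theorem gives $\widehat{f_r}(k)=r^{|k|}\widehat{\mu}(k)$ for all $k\in\mathbb{Z}$.

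Next I would record two elementary properties of the function $r\mapsto\|f_r\|_1$ on $[0,1)$. It is bounded: since $P_r$ is a nonnegative kernel of unit mass, $\|f_r\|_1\leq\|\mu\|$, the total variation of $\mu$. It is nondecreasing: this follows from the subharmonicity of $|f|$, and is in any case implicit in the stated inequality, whose right-hand side presupposes $\|f_r\|_1\leq\|f_\varrho\|_1$ when $r\leq\varrho$. A bounded nondecreasing function converges, so $\|f_r\|_1\to L:=\sup_{0\leq r<1}\|f_r\|_1$ as $r\to 1^-$.

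This sets up the decisive step. For $r\leq\varrho$ both sufficiently close to $1$, the difference $\|f_\varrho\|_1^2-\|f_r\|_1^2$ is as small as we please, since both norms approach the common limit $L$; the inequality from the abstract then shows that $f_r$ satisfies the Cauchy criterion in $L^1(\mathbb{T})$ as $r\to 1^-$. By completeness of $L^1(\mathbb{T})$ there is a function $g$ with $f_r\to g$ in $L^1$. To identify the limit I would use that $L^1$-convergence implies convergence of each Fourier coefficient: from $\widehat{f_r}(k)=r^{|k|}\widehat{\mu}(k)$ we get $\widehat{g}(k)=\lim_{r\to1^-}r^{|k|}\widehat{\mu}(k)=\widehat{\mu}(k)$ for every $k\in\mathbb{Z}$. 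Equivalently, the measures $g\,dm$ and $\mu$, with $m$ normalized Lebesgue measure on $\mathbb{T}$, have the same Fourier coefficients, so by the uniqueness recalled in the introduction $d\mu=g\,dm$, and $\mu$ is absolutely continuous.

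Granting the inequality, every step above is routine; its entire content has been concentrated into that single inequality, whose separate proof is the substantive matter. The only point in the present argument that needs a little care is the boundedness and monotonicity of $\|f_r\|_1$, which together guarantee that the right-hand side of the inequality vanishes in the limit, and I anticipate no genuine obstacle beyond verifying those two facts.
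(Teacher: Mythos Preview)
Your proposal is correct and follows essentially the same route as the paper: form the Poisson extension, observe it is analytic with $\|f_r\|_1\leq\|\mu\|$, invoke the key inequality to see that $(f_r)$ is Cauchy in $L^1(\mathbb{T})$ as $r\to1^-$, and identify the $L^1$ limit with $\mu$ via Fourier coefficients. The paper notes, as you do, that the monotonicity of $r\mapsto\|f_r\|_1$ is already contained in the inequality itself.
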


There are several proofs of Theorem~\ref{thm:fmriesz} of rather distinct flavor. The original proof of F.~\&~M.~Riesz relies on approximation (as does the short proof of {\O}ksendal \cite{Oksendal1971}), while the modern proofs use either Hilbert space techniques or the Poisson kernel. Should the reader desire a side-by-side comparison, we refer to the monograph of Koosis~\cite{Koosis1998} that contains all three variants. 

Our first footnote concerns a simplification to the proof based on the Poisson kernel, so let us recall the setup. The assumptions of Theorem~\ref{thm:fmriesz} ensure that the Poisson extension
\[\mathfrak{P}\mu(z) = \int_0^{2\pi} \frac{1-|z|^2}{|e^{i\theta}-z|^2}\,d\mu(e^{i\theta})\]
is analytic (whence $\mu$ is an ``analytic'' measure) in the unit disc $\mathbb{D}$, since it can be represented by an absolutely convergent power series at the origin. We also get from Fubini's theorem that
\[\int_0^{2\pi} \left|\mathfrak{P}\mu(r e^{i\theta})\right|\,\frac{d\theta}{2\pi} \leq \|\mu\|\]
for every $0\leq r < 1$, where $\|\mu\|$ denotes the total variation of $\mu$. 

In combination, these two assertions show that the function $f = \mathfrak{P}\mu$ is in the Hardy space $H^1(\mathbb{D})$. Let us define $f_r(e^{i\theta})=f(r e^{i\theta})$ for $0 \leq r < 1$. The last step in the proof of Theorem~\ref{thm:fmriesz} is to show that there is a function $f^\ast$ in $L^1(\mathbb{T})$ such that $\|f^\ast-f_r\|_1 \to 0$ as $r\to 1^-$. It would follow from this that $f^\ast = \mu$, since they have the same Fourier coefficients. This is where our proof diverges from the standard proofs, that first use Fatou's theorem to define $f^\ast$ as the boundary value function of $f$ and then establish that $f_r$ converges in norm to $f^\ast$. We will instead use the following result, which in particular means that Fatou's theorem is not required.
\begin{lemma}\label{lem:main} 
	If $f$ is analytic in $\mathbb{D}$ and $0 \leq r \leq \varrho < 1$, then
	\[\int_0^{2\pi} \big|f(re^{i\theta})-f(\varrho e^{i\theta})\big|\,\frac{d\theta}{2\pi} \leq 2 \sqrt{\left(\int_0^{2\pi} \big|f(\varrho e^{i\theta})\big|\,\frac{d\theta}{2\pi}\right)^2 - \left(\int_0^{2\pi} \big|f(r e^{i\theta})\big|\,\frac{d\theta}{2\pi}\right)^2}.\]
\end{lemma}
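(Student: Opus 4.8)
The plan is to reduce the $L^1$ estimate to an $L^2$ estimate, where Parseval's identity makes everything transparent. First I would rescale: setting $F(z)=f(\varrho z)$ produces a function analytic in a neighbourhood of $\overline{\mathbb{D}}$ with $F_1=f_\varrho$ and $F_t=f_r$ for $t=r/\varrho$, so it suffices to treat the case $\varrho=1$ with $f$ analytic across $\mathbb{T}$. The elementary engine is the following $L^2$ inequality: if $\Phi$ is analytic in $\mathbb{D}$ and $0\le t\le 1$, then $\|\Phi_1-\Phi_t\|_2^2\le 2\big(\|\Phi_1\|_2^2-\|\Phi_t\|_2^2\big)$, where $\|\cdot\|_2$ denotes the norm of $L^2(\mathbb{T})$. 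Writing $\Phi(z)=\sum_n c_n z^n$, Parseval turns this into the termwise inequality $(1-t^n)^2\le 2(1-t^{2n})$, which holds because $2(1-t^{2n})-(1-t^n)^2=(1-t^n)(1+3t^n)\ge 0$. In particular $t\mapsto\|\Phi_t\|_2$ is nondecreasing, which, applied to the factors below, also guarantees that the radicand in the statement is nonnegative.

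Next I would pass from $L^1$ to $L^2$ through the factorization $H^1=H^2\cdot H^2$. Since $f$ is analytic across $\mathbb{T}$, its inner--outer factorization $f=BO$ has $B$ a finite Blaschke product and $O$ outer; setting $g=B\,O^{1/2}$ and $h=O^{1/2}$ gives $f=gh$ with $g,h\in H^2$ and $\|g_1\|_2^2=\|h_1\|_2^2=\|f_1\|_1=:A$, because $|B|=1$ on $\mathbb{T}$. The point of using two factors rather than a single square root $\sqrt{f}$ is precisely that it dispenses with any assumption on the zeros of $f$. Telescoping $f_1-f_r=g_1(h_1-h_r)+(g_1-g_r)h_r$, the triangle inequality together with the Cauchy--Schwarz inequality gives
\[\|f_1-f_r\|_1\le \|g_1\|_2\,\|h_1-h_r\|_2+\|g_1-g_r\|_2\,\|h_r\|_2.\]
Applying the $L^2$ inequality to $g$ and to $h$ and abbreviating $p=\|g_r\|_2^2$ and $q=\|h_r\|_2^2$, the right-hand side is at most $\sqrt{2}\big(\sqrt{A(A-q)}+\sqrt{q(A-p)}\big)$.

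It remains to prove that $\sqrt{2}\big(\sqrt{A(A-q)}+\sqrt{q(A-p)}\big)\le 2\sqrt{A^2-pq}$ and to observe that $\|f_r\|_1=\int_0^{2\pi}|g_r|\,|h_r|\,\tfrac{d\theta}{2\pi}\le \|g_r\|_2\|h_r\|_2=\sqrt{pq}$ by Cauchy--Schwarz, so that $A^2-pq\le A^2-\|f_r\|_1^2$ and the asserted bound follows. This last inequality is where I expect the real work to lie, and where the sharp constant $2$ is decided. My plan is to verify it by homogenizing (set $x=p/A$ and $y=q/A$ in $[0,1]$) and squaring twice: the first squaring isolates $2\sqrt{y(1-y)(1-x)}\le 1-xy$, and a second squaring reduces everything to the manifestly true identity $(1-xy)^2-4y(1-y)(1-x)=(1-2y+xy)^2\ge 0$. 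Both squarings are legitimate because every quantity in sight is nonnegative for $x,y\in[0,1]$. Assembling the three displays then completes the proof, with Fatou's theorem nowhere required.
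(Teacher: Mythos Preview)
Your argument is correct and follows the same route as the paper's: factor $f=gh$ with $g=B\sqrt{f/B}$ and $h=\sqrt{f/B}$ for the finite Blaschke product $B$, telescope, apply Cauchy--Schwarz, control $\|g_r-g_\varrho\|_2$ and $\|h_r-h_\varrho\|_2$ via Parseval, and finish with $\|f_r\|_1\le\|g_r\|_2\|h_r\|_2$. The one inefficiency is your coefficient estimate $(1-t^n)^2\le 2(1-t^{2n})$: the paper uses the sharper $(1-t^n)^2\le 1-t^{2n}$ (equivalently $(r^k-\varrho^k)^2\le \varrho^{2k}-r^{2k}$), after which your final step becomes simply $\sqrt{A(A-q)}+\sqrt{q(A-p)}\le 2\sqrt{A^2-pq}$, and each summand is visibly $\le\sqrt{A^2-pq}$ since $p,q\le A$---so the double squaring is unnecessary.
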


Theorem~\ref{thm:fmriesz} now follows at once. Lemma~\ref{lem:main} shows that if $f$ is in $H^1(\mathbb{D})$, then any sequence of functions $f_r$ with $r \to 1^-$ forms a Cauchy sequence in $L^1(\mathbb{T})$. From this point of view, Lemma~\ref{lem:main} should be considered a quantitative version of the qualitative assertion that $\|f^\ast-f_r\|_1 \to 0$ as $r \to 1^-$.

The proof of Lemma~\ref{lem:main} is elementary: it uses only finite Blaschke products, the triangle inequality, the Cauchy--Schwarz inequality, and orthogonality. It inspired by a result of Kulikov \cite{Kulikov2021}*{Lemma~2.1} that essentially corresponds to the case $r=0$. 

It would be interesting to know what the best constant $C$ in the estimate appearing Lemma~\ref{lem:main} is. Our result is that $C \leq 2$. Choosing $f(z)=1+\varepsilon z$ and $r=0$, then letting $\varepsilon \to 0^+$ shows that $C \geq \sqrt{2}$. It can be extracted from the proof of the main result in \cite{BS2024} that $C=\sqrt{2}$ is the best constant for $r=0$. A related problem of interest is to establish versions of Lemma~\ref{lem:main} where $L^p(\mathbb{T})$ takes the place of $L^1(\mathbb{T})$. 

Lemma~\ref{lem:main} also contains the fact that the radial means $r \mapsto \|f_r\|_1$ are increasing. From an historical point of view, let us recall that this answers the question posed by Bohr and Landau to Hardy~\cite{Hardy1915}, which led to the paper that is considered to mark the starting point of the theory. Lemma~\ref{lem:main} provides a simpler proof of this fact, which is typically established using convexity. However, the standard proofs yield the stronger assertion that $\log{r} \mapsto \log{\|f_r\|_1}$ is convex for $0<r<1$. 

Our second footnote concerns the (countably) infinite-dimensional torus
\[\mathbb{T}^\infty = \mathbb{T} \times \mathbb{T} \times \mathbb{T} \times \cdots,\]
that forms a compact abelian group under multiplication. Its dual group is $\mathbb{Z}^{(\infty)}$, the collection of compactly supported integer-valued sequences, and its normalized Haar measure $m_\infty$ coincides with the infinite product measure generated by the normalized Lebesgue arc length measure on $\mathbb{T}$.

The spaces $L^p(\mathbb{T}^\infty)$ contain a natural chain of subspaces that can be identified with $L^p(\mathbb{T}^d)$ for $d=1,2,3,\ldots$ and \emph{die Abschnitte} $\mathfrak{A}_d$ define bounded linear operators on $L^p(\mathbb{T}^\infty)$ that satisfy $\|\mathfrak{A}_1 f\|_p \leq \|\mathfrak{A}_2 f\|_p \leq \|\mathfrak{A}_3 f\|_p \leq \cdots \leq \|f\|_p$ for $f$ in $L^p(\mathbb{T}^\infty)$.

It follows from this that if $f$ is a function in $L^p(\mathbb{T}^\infty)$ and $f_d = \mathfrak{A}_d f$, then $(f_d)_{d\geq1}$ is a bounded sequence in $L^p(\mathbb{T}^\infty)$ that enjoys the \emph{chain property}
\[\mathfrak{A}_d f_{d+1} = f_d\]
for $d=1,2,3,\ldots$. The following fundamental questions arise naturally. 
\begin{enumerate}
	\item[(i)] If $f$ is a function in $L^p(\mathbb{T}^\infty$), then how does $\mathfrak{A}_d f$ tend to $f$ as $d \to \infty$? 
	\item[(ii)] Given a bounded sequence $(f_d)_{d\geq1}$ in $L^p(\mathbb{T}^\infty)$ that enjoys the chain property, is there a function $f$ in $L^p(\mathbb{T}^\infty)$ such that $f_d = \mathfrak{A}_d f$ for $d=1,2,3,\ldots$? 
\end{enumerate}

It is not difficult to prove that if $1 \leq p < \infty$, then answer to (i) is that the sequence $(\mathfrak{A}_d f)_{d\geq1}$ converges to $f$ in norm (see Theorem~\ref{thm:hilbertcritLp} below). If $1<p<\infty$, then a standard argument involving duality and the Banach--Alaoglu theorem shows that the answer to (ii) is affirmative. The conclusion is that in the strictly convex regime there is a one-to-one correspondence between functions in $L^p(\mathbb{T}^\infty)$ and bounded sequences in $L^p(\mathbb{T}^\infty)$ that enjoy the chain property. We refer to this type of result as \emph{Hilbert's criterion}, as the basic idea goes back to Hilbert~\cite{Hilbert1909}. 

It is well-known that Hilbert's criterion does not hold for $L^1(\mathbb{T}^\infty)$, although we have not found this explicitly stated in the literature. Let $z=(z_1,z_2,z_3,\ldots)$ be a point in the infinite polydisc $\mathbb{D}^\infty$ and consider the sequence $(f_d)_{d\geq1}$, where 
\begin{equation}\label{eq:poissonprod} 
	f_d(\chi) = \prod_{j=1}^d \frac{1-|z_j|^2}{|\chi_j-z_j|^2} 
\end{equation}
for $\chi$ on $\mathbb{T}^\infty$. It is not difficult to see that $(f_d)_{d\geq1}$ is bounded sequence in $L^1(\mathbb{T}^\infty)$ enjoying the chain property. However, a result of Cole and Gamelin~\cite{CG1986}*{Theorem~3.1} is equivalent to the assertion that there is a function $f$ in $L^1(\mathbb{T}^\infty)$ such that $f_d = \mathfrak{A}_d f$ for $d=1,2,3,\ldots$ if and only if $z$ is in $\mathbb{D}^\infty \cap \ell^2$. Choosing therefore a point $z$ in $\mathbb{D}^\infty \setminus \ell^2$, we see that (ii) has a negative answer for $p=1$.

Set $\mathbb{N}_0=\{0,1,2,\ldots\}$ and define the Hardy space $H^p(\mathbb{T}^\infty)$ as the closed subspace of $L^p(\mathbb{T}^\infty)$ consisting of the functions $f$ whose Fourier coefficients
\[\widehat{f}(\kappa) = \int_{\mathbb{T}^\infty} f(\chi) \,\overline{\chi^{\kappa}}\,dm_\infty(\chi)\]
are supported on $\mathbb{N}_0^{(\infty)}$. It turns out that Hilbert's criterion holds for $H^1(\mathbb{T}^\infty)$.
\begin{theorem}\label{thm:hilbertcrit} 
	\mbox{} 
	\begin{enumerate}
		\item[(i)] If $f$ is in $H^1(\mathbb{T}^\infty)$, then $\|f-\mathfrak{A}_d f\|_1 \to 0$ as $d \to \infty$. 
		\item[(ii)] If $(f_d)_{d\geq1}$ is a bounded sequence in $H^1(\mathbb{T}^\infty)$ that enjoys the chain property, then there is a function $f$ in $H^1(\mathbb{T}^\infty)$ such that $f_d = \mathfrak{A}_d f$ for $d=1,2,3,\ldots$. 
	\end{enumerate}
\end{theorem}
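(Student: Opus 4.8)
The plan is to reduce Theorem~\ref{thm:hilbertcrit} to the one-dimensional Lemma~\ref{lem:main} (and its consequences) by exploiting the product structure of $\mathbb{T}^\infty$, treating the two parts separately. For part (i), the key observation is that the abstract statement $\|f - \mathfrak{A}_d f\|_1 \to 0$ is really the $H^1$ analogue of the qualitative convergence statement that Lemma~\ref{lem:main} makes quantitative. I would first establish part (i) for a dense class — the analytic trigonometric polynomials, i.e.\ functions whose Fourier support is a finite subset of $\mathbb{N}_0^{(\infty)}$ — where $\mathfrak{A}_d f = f$ for all large $d$ so the claim is trivial. The real content is then to show $\mathfrak{A}_d$ is a contraction on $H^1(\mathbb{T}^\infty)$, which is already granted by the inequality $\|\mathfrak{A}_d f\|_1 \leq \|f\|_1$ stated in the excerpt, and to pass from the dense class to all of $H^1(\mathbb{T}^\infty)$ by a standard $\varepsilon/3$ argument. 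This half I expect to be routine and largely formal.

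The heart of the theorem is part (ii), and here the strategy is to build the limit function $f$ by an exhaustion/compactness argument controlled by Lemma~\ref{lem:main}. Given a bounded sequence $(f_d)_{d\geq1}$ in $H^1(\mathbb{T}^\infty)$ with the chain property, I would view each $f_d$ as a function on the finite torus $\mathbb{T}^d$ and form its Poisson (or, equivalently, radial/multi-radial) extension into the polydisc $\mathbb{D}^d$. The plan is to apply the one-dimensional estimate coordinate by coordinate: for a fixed direction, $r \mapsto \|(f_d)_r\|_1$ is increasing and the increments are controlled by the square-root of the difference of consecutive $L^1$-means, exactly as in Lemma~\ref{lem:main}. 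Iterating this telescoping control across the finitely many active coordinates of $f_{d+1}$ relative to $f_d$, and using the chain property $\mathfrak{A}_d f_{d+1} = f_d$ together with the uniform bound $\sup_d \|f_d\|_1 = M < \infty$, I would show that $(f_d)$ is Cauchy in $L^1(\mathbb{T}^\infty)$. Concretely, because the radial means are increasing and bounded by $M$, their limit exists, so the telescoping sum $\sum_d \|f_{d+1}-f_d\|_1$ is dominated by a convergent series of square-roots of a telescoping sequence of squared norms, forcing the tail to vanish.

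The main obstacle I anticipate is making the coordinatewise application of Lemma~\ref{lem:main} rigorous on $\mathbb{T}^\infty$: the lemma is stated for a single analytic function of one complex variable, whereas $f_{d+1} - f_d$ lives on a higher-dimensional torus and the passage from $f_d$ to $f_{d+1}$ activates a genuinely new coordinate rather than merely shrinking a radius. The clean way around this is to interpret $\mathfrak{A}_d$ as integration over the new coordinate and to slice $f_{d+1}$ as an $H^1(\mathbb{D})$-valued analytic function of the single variable $z_{d+1}$, so that $\|f_{d+1} - \mathfrak{A}_d f_{d+1}\|_1$ becomes precisely an $r \to 1^-$ defect of the form controlled by Lemma~\ref{lem:main} with vector-valued $L^1$ norms. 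One must check that the lemma's proof — using only finite Blaschke products, the triangle and Cauchy--Schwarz inequalities, and orthogonality — survives verbatim when the coefficients are taken in the Banach space $L^1$ of the remaining variables; the orthogonality step is the delicate point, since it relied on the scalar Hilbert-space structure, and I would need to replace it by a Fourier-orthogonality argument in the single distinguished variable that is insensitive to the values in the other coordinates.

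Once the sequence is shown to be Cauchy, it converges to some $f \in L^1(\mathbb{T}^\infty)$; that $f$ lies in $H^1(\mathbb{T}^\infty)$ follows because the Hardy space is closed and each $f_d$ has Fourier support in $\mathbb{N}_0^{(\infty)}$, and the identity $f_d = \mathfrak{A}_d f$ follows by taking the limit in $\mathfrak{A}_d f_n = f_d$ (valid for $n \geq d$ by the chain property) and using the continuity of the contraction $\mathfrak{A}_d$. This closes part (ii) and completes the proof.
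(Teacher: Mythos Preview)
Your treatment of part (i) is correct and matches the paper exactly: density of polynomials plus the contractivity of $\mathfrak{A}_d$, with an $\varepsilon/3$ (really $\varepsilon/2$) argument; the paper records this as Theorem~\ref{thm:hilbertcritLp}.

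Part (ii) contains a genuine gap. Your telescoping claim is false in general: even if you establish $\|f_{d+1}-f_d\|_1 \leq 2\sqrt{\|f_{d+1}\|_1^2-\|f_d\|_1^2}$ for every $d$, the series $\sum_d \sqrt{\|f_{d+1}\|_1^2-\|f_d\|_1^2}$ need not converge when $(\|f_d\|_1)_{d\geq1}$ is merely increasing and bounded (take $\|f_d\|_1^2 = 1-1/d$; the terms behave like $1/d$). So consecutive estimates plus the triangle inequality do not yield the Cauchy property. Separately, the vector-valued version of Lemma~\ref{lem:main} you propose is a dead end: the proof of that lemma divides out a finite Blaschke product and then takes a square root of a nonvanishing scalar analytic function, and neither operation makes sense for Banach-space-valued analytic functions.

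The paper sidesteps both issues by comparing $f_{d_1}$ and $f_{d_2}$ \emph{directly}, using Rudin's slice trick. For a polynomial $P$ in $d_2$ variables one sets
\[F(\chi,z)=P(\chi_1,\ldots,\chi_{d_1},\chi_{d_1+1}z,\ldots,\chi_{d_2}z),\]
applies the \emph{scalar} one-variable estimate (Lemma~\ref{lem:mainadjust}, the $r=0$, $\varrho=1$ case) to $z\mapsto F(\chi,z)$ for each fixed $\chi$, and then integrates over $\chi$ with an outer Cauchy--Schwarz. This gives Lemma~\ref{lem:H1abschnitt}, namely
\[\|f_{d_1}-f_{d_2}\|_1 \leq 2\sqrt{2}\,\sqrt{\|f_{d_2}\|_1}\,\sqrt{\|f_{d_2}\|_1-\|f_{d_1}\|_1},\]
and Cauchy follows immediately because the increasing bounded real sequence $(\|f_d\|_1)_{d\geq1}$ is itself Cauchy. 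The rest of your argument (closedness of $H^1$, continuity of $\mathfrak{A}_d$ to recover $f_d=\mathfrak{A}_d f$) is correct and is exactly what the paper does.
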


Note that a more general version of Theorem~\ref{thm:hilbertcrit}~(ii) can be extracted from work of Bourgain~\cite{Bourgain1983}*{Section~5}. In our context, Theorem~\ref{thm:hilbertcrit} was first enunciated by Aleman, Olsen, and Saksman \cite{AOS2019}*{Corollary~3}. To explain their approach, note that if $z$ is in $\mathbb{D}^\infty \setminus \ell^2$, then it follows from the result of Cole and Gamelin that the sequence $(f_d)_{d\geq1}$ with $f_d$ as in \eqref{eq:poissonprod} will converge weak-$\ast$ to a finite Borel measure $\mu$ on $\mathbb{T}^\infty$ that is not absolutely continuous (with respect to $m_\infty$). This leads us back to the F.~\&~M.~Riesz theorem on analytic measures, which in this context can be formulated as follows.
\begin{theorem}\label{thm:fmrieszinfty} 
	If $\mu$ is a finite complex Borel measure on $\mathbb{T}^\infty$ whose Fourier coefficients
	\[\widehat{\mu}(\kappa) = \int_{\mathbb{T}^\infty} \chi^{-\kappa}\,d\mu(\chi)\]
	are supported on $\mathbb{N}_0^{(\infty)}$, then $\mu$ is absolutely continuous. 
\end{theorem}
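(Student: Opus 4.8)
The plan is to reduce the infinite-dimensional statement, Theorem~\ref{thm:fmrieszinfty}, to the one-dimensional F.~\&~M.~Riesz theorem (Theorem~\ref{thm:fmriesz}) by working one coordinate at a time. The key structural fact is that $\mathbb{T}^\infty \cong \mathbb{T} \times \mathbb{T}^\infty$, so that we may disintegrate a measure $\mu$ on $\mathbb{T}^\infty$ with respect to its first coordinate. First I would observe that the analyticity hypothesis---that $\widehat{\mu}(\kappa) = 0$ unless $\kappa$ is supported on $\mathbb{N}_0^{(\infty)}$---in particular forces the Fourier coefficients to vanish whenever the first coordinate $\kappa_1$ is negative. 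This is precisely the hypothesis needed to apply the classical one-dimensional theorem in the first variable. The difficulty is that a measure need not split as a product, so one must handle the dependence between the first coordinate and the remaining ones carefully.

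Concretely, I would fix a test function $\psi$ on $\mathbb{T}^\infty$ depending only on the coordinates $\chi_2, \chi_3, \ldots$, and consider the measure $\nu_\psi$ on the single circle $\mathbb{T}$ defined by pushing forward $\psi\,d\mu$ under the first-coordinate projection, i.e.\ $\nu_\psi(A) = \int_{\mathbb{T}^\infty} \mathbf{1}_A(\chi_1)\,\psi(\chi_2,\chi_3,\ldots)\,d\mu(\chi)$. The Fourier coefficients of $\nu_\psi$ are readily seen to be linear combinations of the coefficients $\widehat{\mu}(\kappa)$ with $\kappa_1$ ranging over $\mathbb{Z}$ and the remaining entries dictated by $\psi$; the analyticity hypothesis on $\mu$ then guarantees $\widehat{\nu_\psi}(k) = 0$ for $k < 0$. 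By Theorem~\ref{thm:fmriesz}, each $\nu_\psi$ is absolutely continuous with respect to arc length on $\mathbb{T}$. The plan is then to upgrade this family of one-dimensional absolute-continuity statements into absolute continuity of $\mu$ on the whole product.

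The main obstacle will be the passage from absolute continuity in each fixed coordinate to absolute continuity of the full measure on $\mathbb{T}^\infty$. I expect this to require either an inductive argument over finite-dimensional truncations combined with a martingale or limiting argument, or a direct characterization of absolute continuity via the vanishing of $\mu$ on Haar-null sets. One clean route is to use the operators $\mathfrak{A}_d$: the truncated measures $\mathfrak{A}_d \mu$ live effectively on $\mathbb{T}^d$, and an iterated application of the coordinatewise argument shows each $\mathfrak{A}_d \mu$ is absolutely continuous. One then argues that the densities $g_d = d(\mathfrak{A}_d\mu)/dm_\infty$ form a bounded martingale in $L^1(\mathbb{T}^\infty)$ with respect to the filtration generated by the first $d$ coordinates; here the analyticity is crucial, since it is exactly what prevents the densities from concentrating and guarantees uniform integrability. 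Indeed this is where Theorem~\ref{thm:hilbertcrit} and the underlying norm estimate of Lemma~\ref{lem:main} enter: they supply precisely the convergence $\|g_d - g\|_1 \to 0$ needed to identify the limit density $g$ with $\mu$, forcing $d\mu = g\,dm_\infty$.

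The delicate point throughout is uniform integrability of the martingale $(g_d)$, which a priori could fail for a general bounded sequence (as the Cole--Gamelin example in $\mathbb{D}^\infty \setminus \ell^2$ demonstrates). I would therefore emphasize that the analyticity hypothesis is what rescues the argument: it places the truncations $\mathfrak{A}_d\mu$ inside $H^1(\mathbb{T}^d)$, so that Theorem~\ref{thm:hilbertcrit}(i) applies and yields norm convergence rather than mere weak-$\ast$ convergence to a possibly singular limit. Verifying that the martingale hypotheses are met---and in particular that the chain property of $(g_d)$ together with the one-dimensional absolute continuity forces the sequence to be a genuine $L^1$-bounded, uniformly integrable martingale---is the crux, after which the martingale convergence theorem and the identification of Fourier coefficients finish the proof.
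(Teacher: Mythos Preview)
Your second half---assembling the finite-dimensional densities $g_d=d(\mathfrak{A}_d\mu)/dm_\infty$ into a bounded $H^1$ sequence with the chain property and invoking Hilbert's criterion to produce $g\in H^1(\mathbb{T}^\infty)$ with $\mu=g\,dm_\infty$---is exactly what the paper does. (Small correction: the relevant part is Theorem~\ref{thm:hilbertcrit}~(ii), not (i); you need to \emph{construct} $g$ from the sequence, not the converse.)

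Where you diverge is in how you obtain the finite-dimensional absolute continuity. The paper does \emph{not} reduce to the one-dimensional Theorem~\ref{thm:fmriesz}. Instead it takes the Poisson extension of $\mu$ along the diagonal slice $(\chi_1 z,\ldots,\chi_d z,0,\ldots)$, observes that for fixed $\chi$ this is analytic in $z\in\mathbb{D}$, and applies Lemma~\ref{lem:mainadjust} (integrated over $\chi$) to show the dilates form a Cauchy sequence in $L^1$ as $z\to\mathbb{T}$. This manufactures $f_d\in H^1(\mathbb{T}^d)$ directly, with no appeal to Theorem~\ref{thm:fmriesz}.

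Your proposed route through Theorem~\ref{thm:fmriesz} has a genuine gap at the step ``an iterated application of the coordinatewise argument shows each $\mathfrak{A}_d\mu$ is absolutely continuous.'' Knowing that each pushforward $\nu_\psi$ is absolutely continuous on $\mathbb{T}$ tells you, after disintegration, that the conditional measures of $\mu$ in the first coordinate are almost surely absolutely continuous---but it says nothing about the marginal on the remaining coordinates, which could still be singular. Concretely, you get $d\mu=h_{\chi'}(\chi_1)\,dm(\chi_1)\,d\nu(\chi')$, but $\nu$ is the marginal of $|\mu|$, not an analytic measure, so you cannot iterate. What you are implicitly asking for is a vector-valued F.~\&~M.~Riesz theorem (with values in $\mathcal{M}(\mathbb{T}^{d-1})$), and that does not follow from the scalar case by the slicing you describe. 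The standard way to close this gap is precisely the Poisson-extension argument the paper uses, or an appeal to the finite-dimensional theorem as an independent (nontrivial) input.
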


In view of the discussion above, it is plain that Theorem~\ref{thm:fmrieszinfty} implies Theorem~\ref{thm:hilbertcrit}~(ii). A stronger version of Theorem~\ref{thm:fmrieszinfty} goes back to Helson and Lowdenslager \cite{HL1958}. The current version is as stated by Aleman, Olsen, and Saksman \cite{AOS2019}*{Corollary~1}, who proved Theorem~\ref{thm:fmrieszinfty} after first establishing a version of Fatou's theorem in the infinite polydisc. The basic obstacle in this context is that the Poisson extension of $\mu$ is in general only defined on $\mathbb{D}^\infty \cap \ell^1$, and the main effort in \cite{AOS2019} is directed at obtaining a version of Fatou's theorem where $\mathbb{T}^\infty$ is approached from $\mathbb{D}^\infty \cap \ell^1$.

Our proof of the F.~\&~M.~Riesz theorem on $\mathbb{T}$ also leads to simpler proofs of Theorem~\ref{thm:hilbertcrit} and Theorem~\ref{thm:fmrieszinfty}, since we can avoid Fatou's theorem once we have established suitable extensions of Lemma~\ref{lem:main}. 

This line of reasoning also reveals that Theorem~\ref{thm:hilbertcrit} only uses the case $r=0$ of Lemma~\ref{lem:main}, while Theorem~\ref{thm:fmrieszinfty} requires the full result. Inspired by this and by the philosophy behind Hilbert's criterion, we find it natural to incorporate Theorem~\ref{thm:hilbertcrit} in the proof of Theorem~\ref{thm:fmrieszinfty}. Amusingly, this is the reverse direction to how the two results were established in \cite{AOS2019}.

\subsection*{Organization} The present note is comprised of three sections. Section~\ref{sec:main} is devoted to the proof of Lemma~\ref{lem:main}, while Section~\ref{sec:hilbertcrit} contains some expositional material and the proofs of Theorem~\ref{thm:hilbertcrit} and Theorem~\ref{thm:fmrieszinfty}.

\section{Proof of Lemma~\ref{lem:main}} \label{sec:main} 
By continuity, it is sufficient to consider only those $0 < \varrho <1$ such that $f$ does not vanish on the circle $|z|=\varrho$. Since $f$ is analytic in $\mathbb{D}$ it has only a finite number of zeros in $\varrho \mathbb{D}$. Let $(\alpha_n)_{n=1}^m$ denote these zeros (counting multiplicities) and form the finite Blaschke product
\[B(z) = \prod_{n=1}^m \frac{\varrho(\alpha_n-z)}{\varrho^2-\overline{\alpha_n}z}.\]
Note that $|B(z)|=1$ if $|z|=\varrho$. The function $F = f/B$ is analytic and non-vanishing when $|z| < \varrho+\varepsilon$ for some $\varepsilon>0$, due to the assumption that $f$ does not vanish on the circle $|z|=\varrho$. This means in particular that the functions $g = B F^{1/2}$ and $h = F^{1/2}$ are analytic for $|z|<\varrho+\varepsilon$ and that $f=gh$. We write
\[f_r(e^{i\theta})=f(r e^{i\theta}),\qquad g_r(e^{i\theta})=g(r e^{i\theta}), \qquad \text{and}\qquad h_r(e^{i\theta}) = h(r e^{i\theta})\]
for $0 \leq r \leq \varrho$. The triangle inequality and the Cauchy--Schwarz inequality yield that
\[\|f_r - f_\varrho \|_1 \leq \|g_r h_r - g_\varrho h_r \|_1 + \|g_\varrho h_r - g_\varrho h_\varrho \|_1 \leq \|g_r-g_\varrho\|_2\|h_r\|_2 + \|g_\varrho\|_2 \|h_r-h_\varrho\|_2.\]
Since $g$ and $h$ are analytic for $|z| < \varrho+\varepsilon$, their power series at the origin converge absolutely for $|z| \leq \varrho$. We deduce from this, orthogonality, and the trivial estimate $(r^k-\varrho^k)^2 \leq \varrho^{2k}-r^{2k}$ that
\[\|g_r-g_\varrho\|_2 \leq \sqrt{\|g_\varrho\|_2^2 - \|g_r\|_2^2} \qquad \text{and} \qquad \|h_r-h_\varrho\|_2 \leq \sqrt{\|h_\varrho\|_2^2 - \|h_r\|_2^2}.\]
Putting together what we have done so far, we find that
\[\|f_r - f_\varrho \|_1 \leq \sqrt{\|g_\varrho\|_2^2\|h_r\|_2^2 - \|g_r\|_2^2 \|h_r\|_2^2} + \sqrt{\|h_\varrho\|_2^2\|g_\varrho\|_2^2 - \|g_\varrho\|_2^2 \|h_r\|_2^2}.\]
Since plainly $\|h_r\|_2^2 \leq \|h_\varrho\|_2^2$ and $\|g_\varrho\|_2^2 \geq \|g_r\|_2^2$ by orthogonality, we get that
\[\|f_r - f_\varrho\|_1 \leq 2 \sqrt{\|g_\varrho\|_2^2 \|h_\varrho\|_2^2 - \|g_r\|_2^2 \|h_r\|_2^2.}\]
We use that $|B(z)|=1$ for $|z|=\varrho$ to infer that $\|g_\varrho\|_2^2 = \|f_\varrho\|_1$ and $\|h_\varrho\|_2^2 = \|f_\varrho\|_1$, and the Cauchy--Schwarz inequality to infer that $\|g_r\|_2^2 \|h_r\|_2^2 \geq \|f_r\|_1^2$. \qed

\section{Hilbert's criterion} \label{sec:hilbertcrit} 
We find it necessary to begin with some expository material in order to properly set the stage for the proofs of Theorem~\ref{thm:hilbertcrit} and Theorem~\ref{thm:fmrieszinfty}.

If $K$ is a finite subset of $\mathbb{Z}^{(\infty)}$, then we say that the function 
\begin{equation}\label{eq:trigpoly} 
	T(\chi) = \sum_{\kappa \in K} a_\kappa \chi^\kappa 
\end{equation}
is a \emph{trigonometric polynomial} on $\mathbb{T}^\infty$. It follows from the definition of $\mathbb{Z}^{(\infty)}$ that there is for each trigonometric polynomial $T$ a positive integer $d$ such that $T$ only depends on a subset of the variables $\chi_1,\chi_2,\ldots,\chi_d$.

We let $L^p(\mathbb{T}^d)$ stand for the closed subspace of $L^p(\mathbb{T}^\infty)$ obtained as the closure of the set of such trigonometric polynomials. If $f$ is in $L^p(\mathbb{T}^d)$, then the Fourier coefficients of $f$ are plainly supported on sequences in $\mathbb{Z}^{(\infty)}$ of the form 
\begin{equation}\label{eq:Zd} 
	(\kappa_1,\kappa_2,\ldots,\kappa_d,0,0,\ldots). 
\end{equation}
For $d=1,2,3,\ldots$, die Abschnitte $\mathfrak{A}_d f$ are formally defined as replacing the Fourier coefficient $\widehat{f}(\kappa)$ by $0$ whenever $\kappa$ is not of the form \eqref{eq:Zd}. The following result can be obtained from density and the mean value property of trigonometric polynomials. The proof is not difficult and we omit it.
\begin{lemma}\label{lem:abschnitt} 
	Let $1 \leq p < \infty$. For $d=1,2,3,\ldots$, die Abschnitte $\mathfrak{A}_d$ extend to bounded linear operators from $L^p(\mathbb{T}^\infty)$ to $L^p(\mathbb{T}^d)$ satisfying
	\[\|\mathfrak{A}_1 f\|_p \leq \|\mathfrak{A}_2 f\|_p \leq \|\mathfrak{A}_3 f\|_p \leq \cdots \leq \|f\|_p\]
	for every $f$ in $L^p(\mathbb{T}^\infty)$. 
\end{lemma}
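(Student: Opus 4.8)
The plan is to realize $\mathfrak{A}_d$ concretely as integration over the tail variables, which exhibits it directly as a contraction. For a trigonometric polynomial $T$ as in \eqref{eq:trigpoly}, depending only on $\chi_1,\ldots,\chi_N$ with $N\geq d$, I would set
\[\mathfrak{A}_d T(\chi) = \int_{\mathbb{T}^\infty} T(\chi_1,\ldots,\chi_d,\omega_{d+1},\omega_{d+2},\ldots)\,dm_\infty(\omega).\]
The mean value property---that the integral over $\mathbb{T}$ of $\chi_j^{\kappa_j}$ vanishes unless $\kappa_j=0$---shows that this integral annihilates precisely those monomials $\chi^\kappa$ with $\kappa_j\neq0$ for some $j>d$ and fixes the rest. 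Hence the integral representation agrees on trigonometric polynomials with the formal definition of $\mathfrak{A}_d$ (replacing $\widehat{T}(\kappa)$ by $0$ unless $\kappa$ has the form \eqref{eq:Zd}), and its output is a trigonometric polynomial in $\chi_1,\ldots,\chi_d$, hence a member of $L^p(\mathbb{T}^d)$.

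First I would establish the contraction bound $\|\mathfrak{A}_d T\|_p\leq\|T\|_p$. Applying Jensen's inequality to the probability measure $m_\infty$ gives
\[\|\mathfrak{A}_d T\|_p^p \leq \int_{\mathbb{T}^\infty}\int_{\mathbb{T}^\infty}\big|T(\chi_1,\ldots,\chi_d,\omega_{d+1},\omega_{d+2},\ldots)\big|^p\,dm_\infty(\omega)\,dm_\infty(\chi).\]
The integrand depends only on $\chi_1,\ldots,\chi_d$ and on $\omega_{d+1},\omega_{d+2},\ldots$; integrating out the remaining coordinates contributes a factor $1$ because $m_\infty$ is a product of probability measures, and Fubini's theorem then identifies the right-hand side with $\|T\|_p^p$. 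This is the one genuinely measure-theoretic point, and the crux is simply keeping track of which coordinates appear.

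Next I would obtain the monotonicity. A direct check on monomials gives the tower relation $\mathfrak{A}_d=\mathfrak{A}_d\circ\mathfrak{A}_{d+1}$ on trigonometric polynomials, since a monomial survives $\mathfrak{A}_{d+1}$ and then $\mathfrak{A}_d$ exactly when $\kappa_j=0$ for all $j>d$. Combining this with the contraction bound applied to $\mathfrak{A}_{d+1}T$ yields $\|\mathfrak{A}_d T\|_p=\|\mathfrak{A}_d(\mathfrak{A}_{d+1}T)\|_p\leq\|\mathfrak{A}_{d+1}T\|_p$, while the contraction bound itself supplies the final inequality $\|\mathfrak{A}_{d}T\|_p\leq\|T\|_p$; together these give the full chain.

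Finally, since trigonometric polynomials are dense in $L^p(\mathbb{T}^\infty)$ for $1\leq p<\infty$, each $\mathfrak{A}_d$, being a contraction on this dense subspace, extends uniquely to a bounded operator on $L^p(\mathbb{T}^\infty)$; the inequalities persist by continuity of the norm, and the range remains in $L^p(\mathbb{T}^d)$ because that subspace is closed. I expect no deep obstacle here: the only points requiring care are the verification that the integral representation matches the Fourier-coefficient definition and the coordinate bookkeeping in the Fubini step, and everything else is soft. (One can equivalently phrase the argument by recognizing $\mathfrak{A}_d$ as the conditional expectation onto the $\sigma$-algebra generated by $\chi_1,\ldots,\chi_d$, but the explicit integral representation keeps the proof self-contained.)
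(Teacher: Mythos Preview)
Your proposal is correct and matches precisely the approach the paper gestures at: the paper does not actually supply a proof of this lemma, stating only that it ``can be obtained from density and the mean value property of trigonometric polynomials,'' and your argument fills in exactly these ingredients---the integral representation via the mean value property, Jensen/Fubini for the contraction, the tower relation for monotonicity, and density for the extension.
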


We are now in a position to establish Hilbert's criterion for $L^p(\mathbb{T}^\infty)$, which in particular covers the assertion (i) of Theorem~\ref{thm:hilbertcrit}.
\begin{theorem}\label{thm:hilbertcritLp} 
	Suppose that $1 \leq p < \infty$. If $f$ is in $L^p(\mathbb{T}^\infty)$, then
	\[\lim_{d\to \infty} \|f-\mathfrak{A}_d f\|_p = 0.\]
\end{theorem}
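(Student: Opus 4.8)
The plan is to prove this density/convergence statement using the standard approximation machinery available in $L^p$.The plan is to run the standard three-epsilon density argument, exploiting the uniform bound on die Abschnitte furnished by Lemma~\ref{lem:abschnitt}. The two ingredients are that the trigonometric polynomials are dense in $L^p(\mathbb{T}^\infty)$ for $1 \leq p < \infty$, and that each trigonometric polynomial is stabilized by $\mathfrak{A}_d$ once $d$ is large enough.

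First I would fix $f$ in $L^p(\mathbb{T}^\infty)$ and $\varepsilon > 0$, and choose a trigonometric polynomial $T$ as in \eqref{eq:trigpoly} with $\|f - T\|_p < \varepsilon$. By the remark following \eqref{eq:trigpoly}, there is a positive integer $N$ such that $T$ depends only on the variables $\chi_1,\chi_2,\ldots,\chi_N$; equivalently, the Fourier coefficients of $T$ are supported on sequences of the form \eqref{eq:Zd} with $d = N$. Consequently $\mathfrak{A}_d T = T$ for every $d \geq N$, since die Abschnitte act as the identity on such coefficients.

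The conclusion then follows from the triangle inequality: for $d \geq N$ we have
\[\|f - \mathfrak{A}_d f\|_p \leq \|f - T\|_p + \|T - \mathfrak{A}_d T\|_p + \|\mathfrak{A}_d(T - f)\|_p.\]
The middle term vanishes by the choice of $N$, the first term is at most $\varepsilon$, and the last term is at most $\|T - f\|_p < \varepsilon$ because $\|\mathfrak{A}_d\| \leq 1$ by Lemma~\ref{lem:abschnitt}. Hence $\|f - \mathfrak{A}_d f\|_p < 2\varepsilon$ for all $d \geq N$, which gives the claim since $\varepsilon$ was arbitrary.

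The only step that is not purely mechanical, and thus the main obstacle, is the density of trigonometric polynomials in $L^p(\mathbb{T}^\infty)$; this rests on the Stone--Weierstrass theorem on the compact abelian group $\mathbb{T}^\infty$ together with the density of $C(\mathbb{T}^\infty)$ in $L^p(\mathbb{T}^\infty)$ for $p < \infty$. Since the uniform boundedness $\|\mathfrak{A}_d\| \leq 1$ is already recorded in Lemma~\ref{lem:abschnitt}, this is precisely the setting in which the density argument applies, and the hypothesis $p < \infty$ enters only through this density.
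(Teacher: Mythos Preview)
Your argument is correct and is essentially identical to the paper's proof: approximate $f$ by a trigonometric polynomial $T$, note that $\mathfrak{A}_d T = T$ for $d$ large, and control $\|f-\mathfrak{A}_d f\|_p$ via the triangle inequality together with the contractivity of $\mathfrak{A}_d$ from Lemma~\ref{lem:abschnitt}. The only cosmetic difference is that the paper chooses $\|f-T\|_p \leq \varepsilon/2$ and absorbs your vanishing middle term directly, writing $\|T-\mathfrak{A}_d f\|_p = \|\mathfrak{A}_d(T-f)\|_p$.
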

\begin{proof}
	Fix $\varepsilon>0$. By density, we can find a trigonometric polynomial $T$ such that $\|f-T\|_p \leq \varepsilon/2$. Since $T$ is a trigonometric polynomial, there is a positive integer $d_0$ such that $T$ is in $L^p(\mathbb{T}^{d_0})$. It now follows from the triangle inequality and Lemma~\ref{lem:abschnitt} that if $d \geq d_0$, then
	\[\|f-\mathfrak{A}_d f\|_p \leq \|f-T\|_p + \|T-\mathfrak{A}_d f\|_p = \|f-T\|_p + \|\mathfrak{A}_d(T-f)\|_p \leq \varepsilon. \qedhere\]
\end{proof}

We will use a weaker and less attractive version of Lemma~\ref{lem:main} in the proofs of Theorem~\ref{thm:hilbertcrit}~(ii) and Theorem~\ref{thm:fmrieszinfty}. We retain the notation $f_r(e^{i\theta})=f(r e^{i\theta})$ for analytic functions $f$ in $\mathbb{D}$ and $0 \leq r <1$, but write $\|\cdot\|_{L^1(\mathbb{T})}$ to distinguish the norm of $L^1(\mathbb{T})$ from the norm of $L^1(\mathbb{T}^\infty)$.
\begin{lemma}\label{lem:mainadjust} 
	If $f$ is analytic in $\mathbb{D}$ and $0 \leq r \leq \varrho < 1$, then
	\[\|f_r-f_\varrho\|_{L^1(\mathbb{T})} \leq 2\sqrt{2}\sqrt{\|f_\varrho\|_{L^1(\mathbb{T})}}\sqrt{\|f_\varrho\|_{L^1(\mathbb{T})}-\|f_r\|_{L^1(\mathbb{T})}}.\]
\end{lemma}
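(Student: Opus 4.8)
The plan is to deduce Lemma~\ref{lem:mainadjust} directly from Lemma~\ref{lem:main} by algebraic manipulation of the right-hand side, with no new analytic input. The key observation is that Lemma~\ref{lem:main} already delivers
\[\|f_r-f_\varrho\|_{L^1(\mathbb{T})} \leq 2\sqrt{\|f_\varrho\|_{L^1(\mathbb{T})}^2-\|f_r\|_{L^1(\mathbb{T})}^2},\]
so the task reduces to comparing the quantity under the radical, namely $\|f_\varrho\|_1^2-\|f_r\|_1^2$, with the cruder expression $2\|f_\varrho\|_1\big(\|f_\varrho\|_1-\|f_r\|_1\big)$ appearing in the target.

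Writing $a=\|f_r\|_{L^1(\mathbb{T})}$ and $b=\|f_\varrho\|_{L^1(\mathbb{T})}$, I would factor the difference of squares as
\[b^2-a^2 = (b-a)(b+a),\]
and then invoke the monotonicity $a \leq b$ (which is itself contained in Lemma~\ref{lem:main}, as remarked in the introduction) to bound the second factor by $b+a \leq 2b$. This yields
\[b^2-a^2 = (b-a)(b+a) \leq 2b\,(b-a),\]
and substituting into the estimate from Lemma~\ref{lem:main} gives
\[\|f_r-f_\varrho\|_{L^1(\mathbb{T})} \leq 2\sqrt{2b(b-a)} = 2\sqrt{2}\sqrt{b}\sqrt{b-a},\]
which is precisely the claimed inequality once $a$ and $b$ are restored to their original meanings.

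There is essentially no obstacle here: the only point requiring any care is the legitimacy of the bound $a \leq b$, i.e.\ that the radial means $r \mapsto \|f_r\|_{L^1(\mathbb{T})}$ are nondecreasing on $[0,\varrho]$. This is guaranteed because the right-hand side of Lemma~\ref{lem:main} is real, forcing $\|f_\varrho\|_1^2-\|f_r\|_1^2 \geq 0$ for all $r \leq \varrho$. Thus Lemma~\ref{lem:mainadjust} is a genuine (and weaker) consequence of Lemma~\ref{lem:main}, traded in exchange for a right-hand side that factors the $L^1$ discrepancy $\|f_\varrho\|_1-\|f_r\|_1$ linearly rather than through a difference of squares---a form better suited to iteration across the coordinates of $\mathbb{T}^\infty$ in the proofs of Theorem~\ref{thm:hilbertcrit}~(ii) and Theorem~\ref{thm:fmrieszinfty}.
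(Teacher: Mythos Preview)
Your argument is correct and matches the paper's own proof essentially verbatim: the paper simply says to use Lemma~\ref{lem:main} together with the inequality $b^2-a^2 \leq 2b(b-a)$ for $0 \leq a \leq b$, which is exactly the factorization-and-bound you carry out.
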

\begin{proof}
	Use Lemma~\ref{lem:main} and the fact that $b^2-a^2 \leq 2b(b-a)$ for $0\leq a \leq b$. 
\end{proof}

A \emph{polynomial} $P$ on $\mathbb{T}^\infty$ is a trigonometric polynomial \eqref{eq:trigpoly} where the index set $K$ is a subset of $\mathbb{N}_0^{(\infty)}$. Polynomials on $\mathbb{T}^\infty$ are nothing more than classical polynomials in, say, $d$ variables restricted to $(\chi_1,\chi_2,\ldots,\chi_d)$. This means we can extend polynomials on $\mathbb{T}^\infty$ to $\mathbb{C}^\infty$ in the obvious way. In particular, if $d_1\leq d$, then
\[\mathfrak{A}_{d_1} P(\chi) = P(\chi_1,\chi_2,\ldots,\chi_{d_1},0,0,\ldots,0).\]

For the proof of Theorem~\ref{thm:hilbertcrit}~(ii), we will use the following consequence of Lemma~\ref{lem:mainadjust}. The basic idea to embed a \emph{slice} of the disc in a polydisc is from Rudin~\cite{Rudin1969}*{p.~44}.
\begin{lemma}\label{lem:H1abschnitt} 
	If $f$ is in $H^1(\mathbb{T}^\infty)$ and if $d_1 \leq d_2$ are positive integers, then
	\[\|\mathfrak{A}_{d_1}f-\mathfrak{A}_{d_2}f \|_1 \leq 2\sqrt{2} \sqrt{\|\mathfrak{A}_{d_2}f\|_1} \sqrt{\|\mathfrak{A}_{d_2}f\|_1-\|\mathfrak{A}_{d_1}f\|_1}.\]
\end{lemma}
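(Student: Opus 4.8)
The plan is to deduce this estimate from the one--variable inequality of Lemma~\ref{lem:mainadjust} by slicing $\mathbb{T}^{d_2}$ into copies of $\mathbb{T}$ and averaging, following the slice embedding of Rudin. Since both sides of the claimed inequality are continuous functionals of $f$ on $H^1(\mathbb{T}^\infty)$ --- the difference $\|\mathfrak{A}_{d_2}f\|_1-\|\mathfrak{A}_{d_1}f\|_1$ is nonnegative by Lemma~\ref{lem:abschnitt}, so the square roots cause no trouble --- it suffices by density to prove the estimate when $f$ is a polynomial. Write $g=\mathfrak{A}_{d_2}f$ and note that $\mathfrak{A}_{d_1}g=\mathfrak{A}_{d_1}f$. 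For fixed $\chi'=(\chi_1,\dots,\chi_{d_1})$ in $\mathbb{T}^{d_1}$ and $\omega=(\omega_{d_1+1},\dots,\omega_{d_2})$ in $\mathbb{T}^{d_2-d_1}$, I would introduce the one--variable slice
\[\varphi_{\chi',\omega}(z)=g(\chi_1,\dots,\chi_{d_1},z\omega_{d_1+1},\dots,z\omega_{d_2}),\]
which is a polynomial in $z$ satisfying $\varphi_{\chi',\omega}(0)=\mathfrak{A}_{d_1}g(\chi')$.

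Next I would apply Lemma~\ref{lem:mainadjust} to $\varphi=\varphi_{\chi',\omega}$ with $r=0$ and a fixed $\varrho<1$, and then integrate the resulting inequality over $(\chi',\omega)$ against the Haar measure of $\mathbb{T}^{d_1}\times\mathbb{T}^{d_2-d_1}$. The decisive point is a rotation--invariance identity: for any $\theta$ the map $\omega\mapsto e^{i\theta}\omega$ preserves the Haar measure of $\mathbb{T}^{d_2-d_1}$, so for fixed $\chi'$ the inner integral over $\theta$ and $\omega$ collapses, yielding
\[\int_{\mathbb{T}^{d_1}}\int_{\mathbb{T}^{d_2-d_1}}\big\|(\varphi_{\chi',\omega})_\varrho\big\|_{L^1(\mathbb{T})}\,dm(\omega)\,dm(\chi')=\int_{\mathbb{T}^{d_1}}\int_{\mathbb{T}^{d_2-d_1}}|g(\chi',\varrho\omega)|\,dm(\omega)\,dm(\chi'),\]
and analogously for the left--hand side, with the constant $\mathfrak{A}_{d_1}g(\chi')$ subtracted inside the modulus. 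In this way the averaged slice norms become genuine integrals over $\mathbb{T}^{d_2}$ of the $\varrho$--dilation of $g$ in its last $d_2-d_1$ variables, while the term $\|(\varphi_{\chi',\omega})_0\|_{L^1(\mathbb{T})}$ integrates to $\|\mathfrak{A}_{d_1}f\|_1$.

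Finally I would apply the Cauchy--Schwarz inequality to the averaged right--hand side: writing $a=\|(\varphi_{\chi',\omega})_\varrho\|_{L^1(\mathbb{T})}$ and $b=\|(\varphi_{\chi',\omega})_0\|_{L^1(\mathbb{T})}$, it is the average of $2\sqrt{2}\sqrt{a}\sqrt{a-b}$, which Cauchy--Schwarz bounds by $2\sqrt{2}$ times the square root of the average of $a$ times the square root of the average of $a-b$; here $a-b\geq0$ pointwise by the monotonicity of radial means contained in Lemma~\ref{lem:main}. Letting $\varrho\to1^-$ and using the continuity of the polynomial $g$ to pass the dilation to the limit then gives the stated inequality for polynomials, and density finishes the proof. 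The main obstacle I anticipate is organizing the slice--and--average bookkeeping so that the integrated one--dimensional quantities are correctly identified with the full $\mathbb{T}^{d_2}$ norms $\|\mathfrak{A}_{d_2}f\|_1$ and $\|\mathfrak{A}_{d_1}f\|_1$; once the rotation--invariance identity is in hand, the remaining Cauchy--Schwarz and limiting steps are routine.
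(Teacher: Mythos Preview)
Your proposal is correct and follows essentially the same route as the paper: reduce to polynomials by density, form the one-variable slice $z\mapsto g(\chi_1,\ldots,\chi_{d_1},z\omega_{d_1+1},\ldots,z\omega_{d_2})$, apply Lemma~\ref{lem:mainadjust} with $r=0$, integrate over $(\chi',\omega)$, use Cauchy--Schwarz, and invoke rotational invariance of Haar measure to identify the averaged slice norms with $\|\mathfrak{A}_{d_2}f\|_1$ and $\|\mathfrak{A}_{d_1}f\|_1$. The only cosmetic difference is that the paper applies Lemma~\ref{lem:mainadjust} directly with $\varrho=1$ (legitimate since the slice is a polynomial, hence entire), whereas you work with $\varrho<1$ and pass to the limit; your version is slightly more explicit but the argument is the same.
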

\begin{proof}
	By density and Lemma~\ref{lem:abschnitt}, it is sufficient to establish the stated estimate for polynomials $P$ in $L^p(\mathbb{T}^{d_2})$. In this case, we define
	\[F(\chi,z) = P(\chi_1, \chi_2, \ldots, \chi_{d_1}, \chi_{d_1+1}z,\chi_{d_1+2}z,\ldots,\chi_{d_2} z)\]
	for $\chi$ on $\mathbb{T}^\infty$ and $z$ in $\mathbb{C}$. If $\chi$ is fixed, then $f(z) = F(\chi,z)$ is a polynomial and it is permissible to use Lemma~\ref{lem:mainadjust} with $r=0$ and $\varrho=1$. We next integrate over $\chi$ on $\mathbb{T}^\infty$, then finally use the Cauchy--Schwarz inequality to infer that 
	\begin{multline*}
		\int_{\mathbb{T}^\infty} \|F(\chi,\cdot)-F(\chi,0)\|_{L^1(\mathbb{T})} \,dm_\infty(\chi) \\
		\leq 2\sqrt{2} \sqrt{\int_{\mathbb{T}^\infty} \|F(\chi,\cdot)\|_{L^1(\mathbb{T})} \,dm_\infty(\chi) } \\
		\times \sqrt{\int_{\mathbb{T}^\infty} \big(\|F(\chi,\cdot)\|_{L^1(\mathbb{T})}-|F(\chi,0)|\big)\,dm_\infty(\chi)}. 
	\end{multline*}
	The stated estimate follows from this after using that $F(\chi,0) = \mathfrak{A}_{d_1} P(\chi)$ twice, then using Fubini's theorem with the rotational invariance of $m_\infty$ thrice. 
\end{proof}

Lemma~\ref{lem:H1abschnitt} is the key ingredient in our proof of Theorem~\ref{thm:hilbertcrit}~(ii). The idea to establish Hilbert's criterion via a result such as Lemma~\ref{lem:H1abschnitt} is from \cite{BBSS2019}*{Section~2.2}.
\begin{proof}
	[Proof of Theorem~\ref{thm:hilbertcrit}~\normalfont{(ii)}] If $(f_d)_{d\geq1}$ is a bounded sequence in $H^1(\mathbb{T}^\infty)$ that enjoys the chain property, then it follows from Lemma~\ref{lem:H1abschnitt} that $(f_d)_{d \geq1}$ is a Cauchy sequence in $H^1(\mathbb{T}^\infty)$. Hence it must converge to some function $f$ in $H^1(\mathbb{T}^\infty)$. Fourier coefficients are preserved under convergence in $L^1(\mathbb{T}^\infty)$, so that $\mathfrak{A}_d f = f_d$ for $d=1,2,3,\ldots$. 
\end{proof}

In preparation for the proof of Theorem~\ref{thm:fmrieszinfty}, we recall that a result of Cole and Gamelin \cite{CG1986}*{Theorem~4.1} asserts that the infinite product
\[\prod_{j=1}^\infty \frac{1-|z_j|^2}{|\chi_j - z_j|^2}\]
converges to a bounded function on $\mathbb{T}^\infty$ if and only if $z$ is in $\mathbb{D}^\infty \cap \ell^1$. This means that the Poisson extension
\[\mathfrak{P}\mu(z) = \int_{\mathbb{T}^\infty} \prod_{j=1}^\infty \frac{1-|z_j|^2}{|\chi_j - z_j|^2}\,d\mu(\chi)\]
of a finite complex Borel measure $\mu$ on $\mathbb{T}^\infty$ can in general only be defined in $\mathbb{D}^\infty \cap \ell^1$.

Our final preparation for the proof of Theorem~\ref{thm:fmrieszinfty} is to recall that finite complex Borel measures on $\mathbb{T}^\infty$ are uniquely determined by their Fourier coefficients. As in the classical setting, this is a direct consequence of the Riesz representation theorem and the fact that trigonometric polynomials are dense in $C(\mathbb{T}^\infty)$.
\begin{proof}
	[Proof of Theorem~\ref{thm:fmrieszinfty}] If $\chi$ is on $\mathbb{T}^\infty$, $z$ is in $\mathbb{D}$, and $d$ is a positive integer, then the point $(\chi_1 z, \chi_2 z, \ldots, \chi_d z,0,0,\ldots)$ is plainly in $\mathbb{D}^\infty \cap \ell^1$. We can therefore define
	\[F(\chi,z,d) = \mathfrak{P} \mu(\chi_1 z, \chi_2 z, \ldots, \chi_d z,0,0,\ldots).\]
	Using Fubini's theorem as in the classical setting discussed in the introduction, we get that $\|F(\cdot,\varrho,d)\|_1 \leq \|\mu\|$. If $\chi$ and $d$ are fixed, then this and the assumption on the support of the Fourier coefficients of $\mu$ ensure that $F(\cdot,z,d)$ is in
	\[H^1(\mathbb{T}^d) = H^1(\mathbb{T}^\infty) \cap L^1(\mathbb{T}^d).\]
	This assumption also ensures that if $\chi$ and $d$ are fixed, then $f(z) = F(\chi,z,d)$ is analytic in $\mathbb{D}$. Arguing as in the proof of Lemma~\ref{lem:H1abschnitt}, we infer from Lemma~\ref{lem:mainadjust} that
	\[\|F(\cdot,r,d)-F(\cdot,\varrho,d)\|_1 \leq 2\sqrt{2}\sqrt{\|F(\cdot,\varrho,d)\|_1} \sqrt{\|F(\cdot,\varrho,d)\|_1 - \|F(\cdot,r,d)\|_1}\]
	for $0 \leq r \leq \varrho < 1$. We infer from this that there is a function $f_d$ in $H^1(\mathbb{T}^d)$ with $\|f_d\|_1 \leq \|\mu\|$ such that
	\[\lim_{r\to 1^-} \|f_d -F(\cdot,r,d)\|_1 = 0.\]
	It follows that $(f_d)_{d\geq1}$ is a bounded sequence in $H^1(\mathbb{T}^\infty)$ that enjoys the chain property, so by Theorem~\ref{thm:hilbertcrit}~(ii) there is a function $f$ in $H^1(\mathbb{T}^\infty)$ such that $f_d = \mathfrak{A}_d f$ for $d=1,2,3,\ldots$ and so $f = \mu$ by Theorem~\ref{thm:hilbertcrit}~(i). 
\end{proof}

It is possible to give a slightly different proof of Theorem~\ref{thm:fmrieszinfty} that does not use Hilbert's criterion. The idea (from \cite{AOS2019}) is to consider the Poisson extensions of $\mu$ to the points $(\chi_1 z, \chi_2 z^2, \chi_3 z^3, \ldots)$, which are in $\mathbb{D}^\infty \cap \ell^1$ for $\chi$ on $\mathbb{T}^\infty$ and $z$ in $\mathbb{D}$, and then use Lemma~\ref{lem:mainadjust} as above.

\bibliography{fmriesz}

\end{document}